\newtheorem*{thm*}{Theorem}
\newcommand{\beq}{\begin{equation}}
\newcommand{\eeq}{\end{equation}}
\newcommand{\1}{\mathbbm{1}}
\newtheorem{theorem}{Theorem}
\newtheorem{lem}{Lemma}
\newtheorem*{theorem*}{Theorem}
\newtheorem*{WHL}{Weak Hardy-Littlewood Goldbach Conjecture}
\newtheorem*{PPC}{Hardy-Littlewood Prime-Pair Upper Bound Conjecture}
\definecolor{pink}{rgb}{1,.2,.6}
\definecolor{orange}{rgb}{0.7,0.3,0}
\definecolor{blue}{rgb}{.2,.6,.75}
\definecolor{green}{rgb}{.4,.7,.4}
\definecolor{purple}{RGB}{127,0,255}
\numberwithin{equation}{section}
\begin{document}

\title{Note on the Goldbach Conjecture and Landau-Siegel Zeros}

\author[Goldston]{D. A. Goldston}
\address{Department of Mathematics and Statistics, San Jose State University}
\email{daniel.goldston@sjsu.edu}

\author[Suriajaya]{Ade Irma Suriajaya}
\address{Faculty of Mathematics, Kyushu University}
\email{adeirmasuriajaya@math.kyushu-u.ac.jp}
\keywords{Goldbach Conjecture, $L$-functions, Landau-Siegel zero, exceptional character, singular series}
\subjclass[2010]{11M20, 11N13, 11N36, 11N37, 11P32}
\thanks{$^{*}$ The second author was supported by JSPS KAKENHI Grant Number 18K13400.}

\date{\today}

\begin{abstract}
We generalize the work of Fei, Bhowmik and Halupczok, and Jia relating the Goldbach conjecture to real zeros of Dirichlet $L$-functions.

\end{abstract}

\maketitle

\section{Introduction}

Let
\begin{equation}\label{psi_2} \psi_2(n) = \sum_{m+m'=n} \Lambda(m)\Lambda(m'),\end{equation}
where $\Lambda$ is the von Mangoldt function, defined by $\Lambda(n)=\log p$ if $n=p^m$, $p$ a prime and $m\ge 1$, and $\Lambda(n)= 0$ otherwise. Thus $\psi_2(n)$ counts the \lq \lq Goldbach" representations of $n$ as sums of both primes and prime powers, and these primes are weighted to make them have a \lq\lq density" of 1 on the integers. 

Hardy and Littlewood \cite{HardyLittlewood1922} conjectured that, for $n$ even
\begin{equation}\label{GC} \psi_2(n) \sim \mathfrak{S}(n) n, \quad \text{ as} \quad n\to \infty, \end{equation}
where
\begin{equation}\label{SingProd}
\mathfrak{S}(k) = \begin{cases}
{\displaystyle 2 C_{2} \prod_{\substack{ p \mid k \\ p > 2}} \!\left(\frac{p - 1}{p - 2}\right)} & \mbox{if $k$ is even, $k\neq 0$}, \\
 0 & \mbox{if $k$ is odd}\end{cases}
\end{equation}
and
\begin{equation}\label{eq1.3}
C_{2}
 = \prod_{p > 2}\! \left( 1 - \frac{1}{(p - 1)^{2}}\right)
 = 0.66016\ldots.
\end{equation}
When $n$ is odd then the only possible non-zero terms in the sum in \eqref{psi_2} are when $m$ or $m'$ is a power of 2, and since there are $\ll \log n$ such terms we have
\begin{equation}\label{psi_2odd} \psi_2(n) \ll \log^2 n, \qquad \text{for $n$ odd}. \end{equation}

In this paper we will use the following weaker form of \eqref{GC}.
\begin{WHL} Given a fixed constant $0<\delta <1$, then for sufficiently large even $n$, we have
$ | \psi_2(n)-\mathfrak{S}(n)n| \leq (1-\delta) \mathfrak{S}(n) n $. Equivalently, we have
\begin{equation} \label{WGC} \mathbf{(A)} \quad \delta \mathfrak{S}(n) n 
\le \psi_2(n), \qquad \text{and} \qquad \mathbf{(B)} \quad \psi_2(n) \le (2-\delta)\mathfrak{S}(n) n. \end{equation}
\end{WHL}
\noindent We show this conjecture implies that a sequence of Landau-Siegel zeros can only slowly approach 1.
\begin{theorem}\label{thm1} Assume the Weak Hardy-Littlewood Goldbach Conjecture. Let $q$ be sufficiently large, and suppose that $\chi_1$ is the single real character $({\rm mod}\ q)$, if it exists, for which $L(s,\chi_1)$ has a real zero $\beta_1$ satisfying $1- c/\log q < \beta_1 $ for a certain positive absolute constant $c$. Then we have
$\beta_1 < 1- C(\delta)/\log^2q$,
where $C(\delta)$ is a positive effective constant that depends on $\delta$. In particular, if $\chi(-1) =-1$ then this follows from (A) in \eqref{WGC}, and if $\chi(-1)=1$ then this follows from (B) in \eqref{WGC}.
\end{theorem}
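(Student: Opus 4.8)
The plan is to evaluate $\psi_2(n)$ for a carefully chosen even $n$ by the circle method (equivalently, by an explicit-formula computation), isolate the contribution of the exceptional zero $\beta_1$, and show that this contribution pushes $\psi_2(n)$ outside the window permitted by (A)/(B) as soon as $\beta_1$ is too close to $1$. Writing $e(t)=e^{2\pi i t}$, $S(\alpha)=\sum_{m\le n}\Lambda(m)e(m\alpha)$, and $\psi_2(n)=\int_0^1 S(\alpha)^2 e(-n\alpha)\,d\alpha$, the major-arc analysis replaces $\psi(x,\chi)=\sum_{m\le x}\Lambda(m)\chi(m)$ by its explicit-formula value: $x$ for the principal character, $-x^{\beta_1}/\beta_1$ for the single exceptional character $\chi_1$, and only error terms for every other $\chi \pmod q$. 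Equivalently, in residue classes $\sum_{m\le x,\,m\equiv a(q)}\Lambda(m)\approx \frac{1}{\phi(q)}\bigl(x-\chi_1(a)x^{\beta_1}/\beta_1\bigr)$, so each prime variable carries, besides its density $1$, a secondary density $-\chi_1(a)m^{\beta_1-1}$.

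First I would substitute this two-term model into $\psi_2(n)=\sum_{m+m'=n}\Lambda(m)\Lambda(m')$ and expand. The product of the two principal parts reconstructs the main term $\mathfrak{S}(n)n$. The two linear cross terms carry an arithmetic factor that is a sum of $\chi_1(a)$ over a full residue system coprime to $q$, hence vanish to leading order. The genuinely new contribution is the product of the two secondary densities, $\sum_{m+m'=n}\chi_1(m(n-m))\,(m(n-m))^{\beta_1-1}$, whose arithmetic part is the character sum $\sum_{a \bmod q}\chi_1(a(n-a))$ and whose analytic part is $\approx n^{2\beta_1-1}B(\beta_1,\beta_1)$, with $B$ the Beta function and $B(\beta_1,\beta_1)\to 1$ as $\beta_1\to1$.

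The decisive step is the choice of $n$. Taking $n$ divisible by $q$ collapses the character sum to $\sum_{(a,q)=1}\chi_1(-a^2)=\chi_1(-1)\phi(q)$, which has size $\asymp q$ rather than $O(1)$; this is exactly the boost that makes the secondary term comparable to the main term. After matching the remaining local factors one finds $\psi_2(n)\approx \mathfrak{S}(n)\,n\,\bigl(1+\chi_1(-1)\,n^{-2(1-\beta_1)}\bigr)$, up to a proportionality constant tending to $1$ as $\beta_1\to1$. The sign of $\chi_1(-1)$ now drives everything and matches the statement: for even $\chi_1$ the secondary term is positive and inflates $\psi_2(n)$, violating the upper bound (B), whereas for odd $\chi_1$ it is negative and deflates $\psi_2(n)$, violating the lower bound (A). As a conceptual check, the limiting case $\beta_1=1$ (i.e.\ $L(1,\chi_1)=0$) forces all prime mass onto the residues with $\chi_1(a)=-1$, so that $q\mid n$ makes $\psi_2(n)$ either double (even case) or vanish (odd case): precisely the two extremes forbidden by (A)/(B), which also pins the limiting constant at $1$. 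To extract the quantitative bound I would take $\log n\asymp\log^2 q$, which keeps $n^{-2(1-\beta_1)}=e^{-2(1-\beta_1)\log n}$ bounded below by a constant depending only on $\delta$ whenever $1-\beta_1<C(\delta)/\log^2 q$, forcing $|\psi_2(n)-\mathfrak{S}(n)n|>(1-\delta)\mathfrak{S}(n)n$ and contradicting the Weak Hardy--Littlewood Goldbach Conjecture.

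The main obstacle is the uniform control of the error terms, and this is where the exponent $\log^2 q$ (rather than $\log q$) is paid. Two error sources must be dominated by the secondary term $\asymp \mathfrak{S}(n)\,n\,n^{-2(1-\beta_1)}$: the contribution of the non-exceptional zeros of $\prod_{\chi}L(s,\chi)$ to the major arcs, and the minor-arc integral. For the former I would combine the classical zero-free region with the Deuring--Heilbronn repulsion of the remaining zeros away from $1$, which is strongest exactly when $\beta_1$ is abnormally close to $1$; this lets the error $\asymp n^{1-\sigma_2}$ from the nearest other zero be beaten by $n^{2\beta_1-1}$ once $\log n$ is a sufficiently large multiple of $\log^2 q$. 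The minor arcs are handled by the standard Vinogradov estimate. Making all of this uniform in $q$ while keeping $\log n$ as small as $\asymp\log^2 q$ is the delicate part: the main-term mechanism and the sign dichotomy are clean, so the precise exponent in $C(\delta)/\log^2 q$ ultimately reflects the quality of the zero-free region fed into the error analysis.
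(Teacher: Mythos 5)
Your arithmetic mechanism is exactly the right one, and it matches the paper's: force $q\mid n$ so that the character sum $\sum_{(a,q)=1}\chi_1(a)\chi_1(n-a)$ collapses to $\chi_1(-1)\phi(q)$, producing a secondary term of size $\chi_1(-1)\mathfrak{S}(n)\,n\cdot n^{-2(1-\beta_1)}$ whose sign decides whether (A) or (B) is violated, and calibrate $\log n \asymp \log^2 q$ so that this term is a definite constant multiple of the main term. The paper's key inequality, $\Gamma(\beta_1)^2N^{2(\beta_1-1)}\le (1-\delta)+o(1)$ with the same sign dichotomy, is precisely your endgame.

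However, your analytic framework --- the classical circle method applied to a \emph{single} even $n$ --- has a fatal gap at the minor arcs, and it is not one that ``the standard Vinogradov estimate'' can close. For the binary problem one has $\int_0^1|S(\alpha)|^2\,d\alpha=\sum_{m\le n}\Lambda(m)^2\sim n\log n$, which already exceeds the main term $\mathfrak{S}(n)n$; bounding the minor-arc integral by absolute values is therefore hopeless, and no method is known (unconditionally, or conditionally on a zero merely as close to $1$ as the threshold $1-C(\delta)/\log^2 q$ you must contradict) that extracts the needed cancellation for an individual $n$ --- this is exactly why binary Goldbach is open. Deuring--Heilbronn repulsion does not help here: it controls contributions of zeros, i.e.\ major-arc errors, and says nothing about minor arcs; Heath--Brown-type ``Siegel zeros imply twin primes'' machinery does attack binary problems, but it requires the zero to be far closer to $1$ than $1-C(\delta)/\log^2q$ and is a much heavier apparatus. (A secondary issue: your major arcs involve all moduli up to the arc cutoff, not just $q$, so you would need zero control across all those moduli as well.) The paper sidesteps all of this by never evaluating $\psi_2(n)$ for any single $n$. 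It computes the smoothed average over \emph{all} multiples of $q$,
\[
\mathcal{S}(q)\;=\;\sum_{\substack{n\\ q\mid n}}\psi_2(n)r^n\;=\;\frac1q\sum_{a=1}^{q}\Psi\bigl(re(a/q)\bigr)^2,\qquad r=e^{-1/N},
\]
a \emph{finite} sum over the rationals $a/q$ only: orthogonality of the $q$-th roots of unity replaces the circle integral, so there are no minor arcs at all, and each $\Psi(re(a/q))$ splits exactly into progressions mod $q$, evaluated by the prime number theorem for arithmetic progressions with the exceptional-zero term (Montgomery--Vaughan, Cor.~11.17) --- the only prime-distribution input is modulo $q$ itself. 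On the other side, the Weak Goldbach Conjecture, assumed for every even $n$, is inserted into this average together with Montgomery's singular-series average $\sum_{k\le x}\mathfrak{S}(qk)=\frac{q}{\phi(q)}x+O\bigl(\frac{q}{\phi(q)}\log 2x\bigr)$. If you replace your single-$n$ evaluation with this mod-$q$ averaging, the rest of your outline (the vanishing cross terms, the sign dichotomy, and the choice $\log N\asymp\log^2 q$) goes through essentially as you wrote it.
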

Fei \cite{Fei16} obtained this result when $q$ is taken to be a prime $q\equiv 3 ~({\rm mod}\ 4)$.
Bhowmik and Halupczok \cite{BhowHalup2020} obtained the theorem when $\chi(-1)=-1$; actually they use a slightly different form of (A) in \eqref{WGC} which is weaker when $q$ has many prime factors, and the corresponding result on real zeros is consequently also slightly weaker for these $q$. Jia \cite{Jia20} noticed this in the original preprint of \cite{BhowHalup2020} and then used (A) in \eqref{WGC} to prove precisely the result in the theorem above when $\chi(-1)=-1$. It should also be pointed out that Bhowmik and Halupczok proved their result when the Weak Goldbach Conjecture is allowed to not hold on a certain exceptional set.  In a recent talk Bhowmik mentioned as a goal proving the current theorem when $\chi(-1)=1$ using (B) of \eqref{WGC}.

We follow the method of Fei \cite{Fei16} and Bhowmik and Halupczok \cite{BhowHalup2020}, but make use of power series generating functions instead of exponential generating functions, which makes possible the use of the upper bound (B) in \eqref{WGC}. By rearranging the earlier proofs we are also able to remove the need for Gaussian sums in our proof.

With minor adjustments to our proof, we can prove a form of \Cref{thm1} using a prime-pair conjecture in place of the Goldbach conjecture. For $k\ge 0$ let
\begin{equation} \label{psi_2(x,k)} \psi_2(x,k) := \sum_{n\le x} \Lambda(n)\Lambda(n-k) = \sum_{k<n\le x} \Lambda(n)\Lambda(n-k).\end{equation}
The Hardy-Littlewood conjecture we need here is that, for even $2\le k\le x$,
\begin{equation}  \psi_2(x,k) = \mathfrak{S}(k)(x-k) + o(\mathfrak{S}(k)x). \end{equation}
Note that this is true in the range $x-o(x) \le k \le x$ by a standard sieve result \cite[Cor. 5.8.1]{HR11}.
If $k$ is odd then in the same way we obtained \eqref{psi_2odd} we have
\begin{equation} \label{psi_2(x,k)odd} \psi_2(x,k) \ll \log^2x. \end{equation}
The conjecture we need is an upper bound that is slightly smaller than twice the conjectured main term.
\begin{PPC} Given a fixed constant $0<\delta <1$, then for even $2\le k\le x$ and sufficiently large $x$, we have
\begin{equation} \label{WPPC}  \psi_2(x,k) \le (2-\delta)\mathfrak{S}(k) (x-k)+o(\mathfrak{S}(k)x). \end{equation}
\end{PPC}

\begin{theorem}\label{thm2}  Theorem 1 holds if we replace the Weak Hardy-Littlewood Goldbach Conjecture with the Hardy-Littlewood Prime-Pair Upper Bound Conjecture. 
\end{theorem}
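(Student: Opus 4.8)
To prove \Cref{thm2}, the plan is to re-run the proof of \Cref{thm1} with $\psi_2(x,k)$ in place of $\psi_2(n)$, changing the arithmetic input only at the single step where the Weak Hardy-Littlewood Goldbach Conjecture was used. The point is that the generating-function analysis that links the putative Landau-Siegel zero $\beta_1$ of $L(s,\chi_1)$ to the additive data is insensitive to whether that data is the Goldbach count $\psi_2(n)$ or the prime-pair count $\psi_2(x,k)$: both have the singular series $\mathfrak{S}(k)$ of \eqref{SingProd} as the coefficient of their linear main term, and in both the zero $\beta_1$ contributes the same kind of secondary term. Concretely, I would replace the power series $\sum_{n}\psi_2(n)z^n=\big(\sum_n\Lambda(n)z^n\big)^2$ by the Laurent polynomial $\big(\sum_{n\le x}\Lambda(n)z^n\big)\big(\sum_{n\le x}\Lambda(n)z^{-n}\big)$, whose coefficient of $z^k$ is $\psi_2(x,k)$ of \eqref{psi_2(x,k)}, and run the same $z\to 1^-$ analysis; the dependence on $\chi_1$ enters, as in \Cref{thm1}, through the behaviour of $\sum_{n\le x}\Lambda(n)\chi_1(n)z^{n}$, which is controlled by $\beta_1$. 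Here the truncation length $x$ plays the role that the scale $n$ (equivalently $1/(1-z)$) played in \Cref{thm1}.

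On the analytic side the zero $\beta_1$ produces a secondary term in $\psi_2(x,k)$ of size comparable to $\mathfrak{S}(k)\,x^{\beta_1}$, whose sign is governed by $\chi_1$. This is where the main simplification occurs: because in the prime-pair setting the difference $k$ is a free parameter, one may simply choose an even $k$ (with $2\le k\le x$) for which this secondary term is \emph{positive}, so that $\psi_2(x,k)$ exceeds its expected main term $\mathfrak{S}(k)(x-k)$ by an amount of order $\mathfrak{S}(k)x^{\beta_1}$. For such a $k$ the upper bound \eqref{WPPC} gives
\begin{equation*}
c\,\mathfrak{S}(k)\,x^{\beta_1}\le (1-\delta)\,\mathfrak{S}(k)(x-k)+o\big(\mathfrak{S}(k)x\big)
\end{equation*}
for a positive constant $c$, and, choosing $x$ appropriately in terms of $q$ exactly as in \Cref{thm1}, this is impossible once $1-\beta_1$ is smaller than $C(\delta)/\log^2 q$. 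Thus the single upper bound \eqref{WPPC} does the work of both (A) and (B) of \eqref{WGC}: the parameter $k$ lets us select the favourable sign, so the distinction between $\chi_1(-1)=1$ and $\chi_1(-1)=-1$ that forced the use of a lower bound in \Cref{thm1} no longer requires a separate conjecture. I would arrange the computation so that the secondary term and the power $\log^2 q$ emerge exactly as in \Cref{thm1}; the verification that the relevant weighted singular-series sums reduce to the same $L$-function data used there (so that, as in the Goldbach case, no Gauss sums are needed) is routine bookkeeping, with the odd-$k$ contributions discarded by \eqref{psi_2(x,k)odd}.

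The main obstacle is error control. The bound \eqref{WPPC} provides only an $o(\mathfrak{S}(k)x)$ error for each even $k$, whereas the inequality above must compete with the secondary term of order $\mathfrak{S}(k)x^{\beta_1}=\mathfrak{S}(k)x\cdot x^{-(1-\beta_1)}$, which is only barely smaller than $\mathfrak{S}(k)x$ once $x$ is chosen as above and $1-\beta_1\asymp 1/\log^2 q$. I must therefore check that the $o(\cdot)$ in \eqref{WPPC} is uniform over the relevant even $k$ and over the chosen range of $x$, and, if the argument is instead run by summing $\psi_2(x,k)$ against a nonnegative weight such as $1\pm\chi_1(k)$ (so as to use the unconditional total $\sum_{k}\psi_2(x,k)\sim x^2/2$ from the prime number theorem in place of a single favourable $k$), that the accumulated error after summation over $k$ still stays below the secondary term. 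A secondary point requiring care is the interaction between the evenness of $k$ and the value $\chi_1(k)$ when $q$ is even, where every even $k$ shares a common factor with $q$; this affects precisely which $k$ may be chosen and must be handled by passing to the conductor of $\chi_1$.
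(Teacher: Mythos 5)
Your overall framing---replace $\Psi(z)^2$ by $|\Psi(z)|^2$, whose Fourier coefficients in $\alpha$ are the prime-pair counts $\psi_2(x,k)$, and rerun the two evaluations---is indeed the paper's starting point. But the central step of your argument has a genuine gap: you propose to choose a single even $k$ for which the secondary term is positive, so that $\psi_2(x,k)$ exceeds its main term by an amount of order $\mathfrak{S}(k)x^{\beta_1}$, and then contradict \eqref{WPPC} for that one $k$. No such lower bound for an individual $k$ can be established: for any fixed even $k$, $\psi_2(x,k)$ is a twin-prime-type count, and neither the prime number theorem for arithmetic progressions nor the existence of the zero $\beta_1$ yields an asymptotic, or even a nontrivial lower bound, for a single difference $k$. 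The claim that $\beta_1$ ``produces a secondary term in $\psi_2(x,k)$ of size comparable to $\mathfrak{S}(k)x^{\beta_1}$ whose sign is governed by $\chi_1$'' is a conjecture-level heuristic, not a provable input; the exceptional zero controls only averages of $\psi_2(x,k)$ over $k$. Your fallback---weighting by $1\pm\chi_1(k)$ and invoking $\sum_k\psi_2(x,k)\sim x^2/2$---hits the same wall in another form: $\sum_k\chi_1(k)\psi_2(x,k)=\sum_{n,n'}\Lambda(n)\Lambda(n')\chi_1(n-n')$, and $\chi_1(n-n')$ does not factor through $n$ and $n'$ separately, so the prime number theorem in progressions does not evaluate this sum.

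The paper resolves this by averaging over the progression $k\equiv 0\ (\mathrm{mod}\ q)$ rather than selecting a favourable $k$: setting $\mathcal{T}(q)=\frac1q\sum_{a=1}^q|\Psi(re(a/q))|^2$, orthogonality of additive characters restricts to $q\mid b-b'$, forcing $b=b'$, so that $\mathcal{T}(q)=\sum_{b=1}^q\Psi(r;q,b)^2$. The exceptional-zero contribution then enters through $\chi_1(b)\cdot\chi_1(b)=1$ rather than $\chi_1(b)\chi_1(-b)=\chi_1(-1)$, i.e.\ the term $\Gamma(\beta_1)^2N^{2\beta_1}/\phi(q)$ appears with a plus sign automatically, for every $q$. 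It is this square structure---not any freedom in choosing $k$---that lets the single upper bound \eqref{WPPC} replace both (A) and (B) of \eqref{WGC} and makes the parity of $\chi_1$ drop out. On the conjectural side one then needs \eqref{WPPC} only summed over $k=qj$ against the smooth weight $e^{-qj/N}$ (together with the unconditional bounds for odd $k$ and for $\Psi_2(r,0)$), with the singular-series average handled by \Cref{lem1} exactly as in the Goldbach case; and your worry about even $q$ versus the value $\chi_1(k)$ never arises, because $\chi_1$ is only ever evaluated at residues $b$ coprime to $q$, never at $k$. If you replace your single-$k$ selection by this averaged identity, the rest of your outline (the choice of the scale in terms of $q$, and the uniformity over $k$ of the $o(\cdot)$ in \eqref{WPPC}, which the conjecture as stated does supply) goes through as in \Cref{thm1}.
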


\section{Evaluating \texorpdfstring{$\mathcal{S}(q)$}{S(q)} in two ways}

Here all sums run over the positive integers unless specified otherwise. We use the power series generating function
\begin{equation} \label{Psi} \Psi(z) = \sum_{n} \Lambda(n) z^n ,\qquad z=r e(\alpha), \quad e(\alpha) = e^{2\pi i \alpha}, \end{equation}
for $|z|=r<1$. Squaring, we have 
\begin{equation} \label{Psi^2}  \Psi(z)^2 = \sum_{m,m'}\Lambda(m)\Lambda(m') z^{m+m'} = \sum_{n} \psi_2(n) z^n. \end{equation}
Fei's idea is equivalent in this setting to computing
\begin{equation} \label{Sq} \mathcal{S}(q) := \frac{1}{q} \sum_{a=1}^q \Psi(re(a/q))^2   \end{equation}
in two ways. First, using \eqref{Psi^2} and $ \frac1q\sum_{a=1}^qe(an/q) =\1_{q\mid n} $, we have
\begin{equation} \label{Sq1} \mathcal{S}(q) = \frac{1}{q} \sum_{a=1}^q \sum_n\psi_2(n) r^n e(an/q) = \sum_{\substack{n \\ q \mid n}} \psi_2(n)r^n. \end{equation}
Summing $\psi_2(n)$ over multiples of $q$ is a problem that has already occurred in \cite{Gran1}, \cite{Gran2}, and \cite{Bhowmik-H-M-SGoldbach2019}. 

To obtain the second formula for $S(q)$, as in the circle method we separate the terms in $\Psi(re(a/q))$ according to the arithmetic progressions they belong to modulo $q$. Letting
\begin{equation} \label{Psi(r)} \Psi(r; q, b) := \sum_{\substack{n\\ n\equiv b \,({\rm mod}\, q)} }\Lambda(n) r^n, \end{equation}
we have
\[ \Psi(re(a/q)) = \sum_{b=1}^q \sum_{\substack{n\\ n\equiv b \,({\rm mod}\, q)}}\Lambda(n) r^n e(an/q) = \sum_{b=1}^q e(ab/q)\Psi(r; q, b) ,\]
and therefore
\[ \begin{split}  \mathcal{S}(q) &= \frac{1}{q} \sum_{a=1}^q \sum_{1\le b,b'\le q} e(a(b+b')/q)\Psi(r; q, b)\Psi(r; q, b') \\ &
= \sum_{\substack{1\le b,b'\le q\\ q \mid b+b'}} \Psi(r; q, b)\Psi(r; q, b') .\end{split}\]
The conditions on $b$ and $b'$ in the last sum imply that $b'= q-b$ for $1\le b\le q-1$ or $b=b'=q$, and therefore we conclude, since $\Psi(r; q, q-b)=\Psi(r; q, -b)$, 
\begin{equation} \label{Sq2} \mathcal{S}(q) = \sum_{b=1}^q\Psi(r; q, b)\Psi(r; q, -b). \end{equation}

\section{Evaluating \texorpdfstring{$\mathcal{S}(q)$}{S(q)} using the Goldbach Conjecture}

Letting
\begin{equation} \label{N} r= e^{-1/N}, \end{equation}
then on taking $n=qk$ and using \eqref{psi_2odd} we have 
\[ \mathcal{S}(q) ~\overset{\eqref{Sq1}}=~ \sum_{\substack{n \\ q \mid n}} \psi_2(n)r^n = \sum_{k} \1_{2|qk} \psi_2(qk) e^{-qk/N} + O(\sum_{n}(\log^2n) e^{-n/N})=\sum_{k} \1_{2|qk} \psi_2(qk) e^{-qk/N} + O(N\log^2N).\]
From now on we specify that 
\begin{equation} \label{q<N} 1\le q\le N. \end{equation}
Letting 
\begin{equation}\label{V_q} V_q(N) := q\sum_{k} \mathfrak{S}(qk)k e^{-qk/N},\end{equation}
we conclude from \eqref{WGC} that the Weak Goldbach Conjecture implies
\begin{equation} \label{Sq1b} \delta  V_q(N) + O(N\log^2N) \leq \mathcal{S}(q) \leq (2-\delta )  V_q(N) + O(N\log^2N). \end{equation}
To evaluate $V_q(N)$, we need a formula for the singular series average
\begin{equation} G_q(x) := \sum_{k\le x} \mathfrak{S}(qk). \end{equation}
\begin{lem}[Montgomery] \label{lem1} For $x\ge 1$  we have
\begin{equation} G_q(x) = \frac{q}{\phi(q)} x + O(\frac{q}{\phi(q)}\log 2x) \end{equation}
uniformly for all $q\ge 1$.
\end{lem}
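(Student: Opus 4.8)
The plan is to reduce $G_q(x)$ to the partial sum of a single multiplicative function by exploiting the convolution structure of $\mathfrak{S}$. First I would record that, for even $m$, one has $\mathfrak{S}(m) = 2C_2 \sum_{d \mid m} h(d)$, where $h$ is the multiplicative function supported on odd squarefree integers with $h(p) = 1/(p-2)$ for $p>2$; this is just the expansion of $\prod_{p\mid m,\, p>2}(1 + \frac{1}{p-2})$. Since $\mathfrak{S}$ vanishes on odd integers, writing $m = qk$ gives $G_q(x) = \sum_{q\mid m \le qx}\mathfrak{S}(m)$, and the evenness constraint is captured by $Q := \mathrm{lcm}(2,q)$, so that after interchanging the $d$- and $m$-sums and counting multiples of $\mathrm{lcm}(Q,d)$ up to $qx$ one gets $G_q(x) = 2C_2 \sum_{d} h(d)\,\lfloor qx/\mathrm{lcm}(Q,d)\rfloor$.

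The structural key is the observation that, for odd squarefree $d$, if $d_2$ denotes the part of $d$ composed of primes \emph{not} dividing $q$, then $\mathrm{lcm}(Q,d) = Q d_2$; consequently both the argument of the floor and the summand factor as a function of $d_1 \mid \mathrm{rad}(q)$ times a function of $d_2$ coprime to $q$. Replacing $\lfloor y\rfloor$ by $y$ in the nonzero terms, the main term becomes $2C_2 \tfrac{qx}{Q}\big(\prod_{p\mid q,\,p>2}\tfrac{p-1}{p-2}\big)\big(\prod_{p\nmid q,\,p>2}\tfrac{(p-1)^2}{p(p-2)}\big)$. I would then simplify using $\frac{(p-1)^2}{p(p-2)} = (1 - (p-1)^{-2})^{-1}$, so that the second product combines with the constant $C_2$, and the factors $\frac{p-1}{p-2}\cdot\frac{p(p-2)}{(p-1)^2} = \frac{p}{p-1}$ collapse the whole expression to $\frac{q}{\phi(q)}x$ (checking the cases $q$ even and $q$ odd separately, where $q/Q$ equals $1$ and $\tfrac12$ respectively).

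For the error I would split into the rounding of the floor and the tail $\mathrm{lcm}(Q,d) > qx$. The floor contributes $\ll C_2\big(\prod_{p\mid q,\,p>2}\tfrac{p-1}{p-2}\big)\sum_{d_2 \le qx/Q} h(d_2)$. Two uniform estimates finish the job: (i) $\prod_{p\mid q,\,p>2}\frac{p-1}{p-2} \le C_2^{-1}\,\frac{q}{\phi(q)}$, which follows from the same identity used above and makes all the $q$-dependence collapse into the desired factor $\frac{q}{\phi(q)}$; and (ii) $\sum_{d\le y} h(d) \ll \log 2y$ with an absolute constant, which I would prove by writing $h(d) = g(d)/d$ with $g(p) = p/(p-2) = 1 + G(p)$, expanding $g = \mathbf 1 * G$, and using that $\sum_e G(e)\mu^2(e)/e = \prod_{p>2}(1 + \frac{2}{p(p-2)})$ converges. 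Since $qx/Q \le x$, this yields a floor error $\ll \frac{q}{\phi(q)}\log 2x$. For the tail, the same factorization together with a partial summation (using $\sum_{d\le t}\mu^2(d)g(d) \ll t$) gives $\frac{qx}{Q}\sum_{d_2 > qx/Q} h(d_2)/d_2 \ll 1$, hence a tail error $\ll \frac{q}{\phi(q)}$, which is absorbed.

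The main obstacle is the uniformity in $q$: a naive bound of the summand loses the divergent factor $\prod_{p\mid q}\frac{p-1}{p-2}$, and the point of estimate (i) is that this factor is controlled by $\frac{q}{\phi(q)}$ up to the absolute constant $C_2^{-1}$, while the factorization $\mathrm{lcm}(Q,d) = Q d_2$ cleanly separates the ``$q$-part'' of $d$ (finitely many divisors) from the coprime part (which produces the $\log$). Establishing (i) and securing an absolute constant in (ii) is where the real work lies; the remainder is bookkeeping.
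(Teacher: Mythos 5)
Your proposal is correct and follows essentially the same route as the paper's proof (Montgomery's convolution argument): your expansion $\mathfrak{S}(m) = 2C_2\sum_{d\mid m}h(d)$, the splitting $d = d_1d_2$ into the $q$-part and the part coprime to $q$, and your key estimates (i) and (ii) correspond exactly to the paper's factorization $H(qk)=H(q)H_q(k)$, its divisor expansion $H_q(k)=\sum_{d\mid k}f_q(d)$, its bound $H(q)\ll q/\phi(q)$, and its bound $\sum_{d\le x}f_1(d)\ll \log 2x$. The only cosmetic differences are organizational: you handle the parity constraint via $Q=\mathrm{lcm}(2,q)$ and separate the $q$-dependence after interchanging sums, whereas the paper extracts $H(q)$ from the singular series first and treats $q$ even/odd by substitution, and you prove the logarithmic bound by a convergent Dirichlet convolution rather than by Mertens' theorem.
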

This lemma follows from \cite[Lemma 17.4]{Montgomery71}; we will give a complete proof in Section 6. 
Writing \Cref{lem1} in the form $G_q(x) = \frac{q}{\phi(q)}x + R_q(x)$ for $x\ge 1$, we obtain by partial summation
\[ \begin{split}  V_q(N) &= q\int_{1^-}^\infty ue^{-qu/N}dG_q(u)\\& =  \frac{q^2}{\phi(q)}\int_{1}^\infty ue^{-qu/N}du +q\int_{1^-}^\infty ue^{-qu/N}dR_q(u) \\&
:= I_1 +I_2.\end{split}\]
Using the condition $1\le q\le N$, we have on letting $v=qu/N$
\[ \begin{split} I_1 &= \frac{N^2}{\phi(q)}\int_{q/N}^\infty ve^{-v}dv \\&
=  \frac{N^2}{\phi(q)}\left(\int_{0}^\infty ve^{-v}dv +O(\frac{q}{N})\right) \\&
= \frac{N^2}{\phi(q)} + O(\frac{qN}{\phi(q)} ).
\end{split} \]
Next, integrating by parts and using $R_q(x)\ll \frac{q}{\phi(q)}\log(2x)$, we have
\[ \begin{split} I_2 &= O\left(\frac{q^2}{\phi(q)}e^{-q/N}\right) - q\int_1^\infty (1-\frac{q}{N}u)e^{-qu/N}R_q(u)\, du \\&
\ll \frac{q^2}{\phi(q)} + \frac{q^2}{\phi(q)}\int_1^{N/q}e^{-qu/N} \log (2u) \, du + \frac{q^3}{\phi(q)N} \int_{N/q}^\infty u  e^{-qu/N}\log(2u)\, du  \\&
\ll \frac{q^2}{\phi(q)} +\frac{q}{\phi(q)} N\log(2N/q) + \frac{qN}{\phi(q)} \int_{1}^\infty ve^{-v}\log(2Nv/q) \, dv \\ &
\ll\frac{q^2}{\phi(q)} +\frac{q}{\phi(q)} N\log(2N/q) .
\end{split} \]
Since
\begin{equation} \label{q_phi(q)}
\frac{q}{\phi(q)} \ll \log\log 3q ~\overset{\eqref{q<N}}{\ll}~ \log\log N
\end{equation}
holds, we conclude that
\begin{equation}\label{V_q=} V_q(N) = \frac{N^2}{\phi(q)} + O(N\log N\log \log N). \end{equation}
Thus by \eqref{Sq1b}
\begin{equation} \label{Sq1c} \delta \frac{N^2}{\phi(q)} + O(N\log^2 N)  \leq  \mathcal{S}(q) \leq (2-\delta )\frac{N^2}{\phi(q)} + O(N\log^2 N). \end{equation}

\section{Evaluating \texorpdfstring{$\mathcal{S}(q)$}{S(q)} using the Prime Number Theorem for Arithmetic Progressions}

Let
\begin{equation} \psi(x;q,a) := \sum_{\substack{n\le x\\ n\equiv a \,({\rm mod}\, q)}}\Lambda(n). \end{equation}
We will make use of the prime number theorem for arithmetic progressions for a modulus $q$ which has a possible exceptional real character $\chi_1$ as described in our theorem. By \cite[Cor. 11.17]{MontgomeryVaughan2007}, we have that there is a positive constant $c_1$ 
such that for $(a,q)=1$
\begin{equation} \label{PNTAP} \psi(x;q,a) = \frac{x}{\phi(q)} - \frac{\chi_1(a)x^{\beta_1}}{\phi(q) \beta_1} + O(xe^{-c_1\sqrt{\log x}}) .\end{equation} 
As explained in \cite[proof of Corollary 11.17]{MontgomeryVaughan2007}, a bound on $q$ such as $ q\le e^{2c_1\sqrt{\log x}}$ which is usually imposed on \eqref{PNTAP} is not needed because $\psi(x;q,a)$ trivially satisfies a smaller bound than the right-hand side of \eqref{PNTAP} when $q\ge e^{2c_1\sqrt{\log x}}$. 
Thus by partial summation we have using \eqref{Psi(r)} and \eqref{N} that, with $(b,q)=1$,
\[ \begin{split} \Psi(r; q, b) &= \sum_{\substack{n\\ n\equiv b \,({\rm mod}\, q)} }\Lambda(n) e^{-n/N}  = \int_{2^-}^\infty e^{-u/N} \, d\psi(u;q,b)  \\& 
=\int_0^\infty e^{-u/N}\left( \frac{1}{\phi(q)} - \frac{\chi_1(b)u^{\beta_1-1}}{\phi(q)}\right)\, du  + O\left(\frac{1}{N}\int_2^\infty ue^{-u/N}e^{-c_1\sqrt{\log u}}\, du\right)\\&
= \frac{N}{\phi(q)} \int_0^\infty e^{-t} \, dt + \frac{\chi_1(b)N^{\beta_1}}{\phi(q)} \int_0^\infty t^{\beta_1-1} e^{-t} \, dt+O(Ne^{-c_1\sqrt{\log N}}).
\end{split}\]
Recalling the gamma function
\begin{equation} \label{gamma} \Gamma(s) = \int_0^\infty t^{s-1}e^{-t}\, dt, \qquad \text{Re}(s)>0, \end{equation}
we conclude, for $(b,q)=1$, 
\begin{equation} \label{PsiAP} \Psi(r; q, b) =  \frac{N}{\phi(q)}  + \frac{\chi_1(b)\Gamma(\beta_1)N^{\beta_1}}{\phi(q)}+O(Ne^{-c_1\sqrt{\log N}}). \end{equation}
To apply this result to $\mathcal{S}(q)$, we first show that
\begin{equation}\label{Sq3} \mathcal{S}(q) ~\overset{\eqref{Sq2}}=~ \sum_{b=1}^q\Psi(r; q, b)\Psi(r; q, -b) = \sum_{\substack{1\le b\le q\\ (b,q)=1}}\Psi(r; q, b)\Psi(r; q, -b) +O(q(\log q\log N)^2), \end{equation}
which follows immediately from 
\begin{equation} \label{Psi(d>1)} \Psi(r;q,b)\ll \log q\log N,\qquad \text{ when} \quad (b,q)=d>1.\end{equation}
To prove this estimate, first note that when $(b,q)=d>1$, 
\[ \Psi(r;q,b) = \sum_{\substack{n\\n\equiv b \,({\rm mod}\, q)\\(b,q)=d>1} }\Lambda(n) r^n = \sum_{d\mid n}\Lambda(n) r^n = \sum_{m }\Lambda(dm)r^{dm}.\] 
Now $\Lambda(dm) =\log p$ if and only if $dm=p^j$, $j\ge 1$ while $\Lambda(dm)=0$ otherwise. Therefore 
\[ \Psi(r;q,b) \le \log q \sum_je^{-2^j/N}.\]
The estimate \eqref{Psi(d>1)} now follows from
\begin{equation} \label{sum_exp} \sum_je^{-2^j/N} \ll
\sum_{j< 2\log N} 1 + \sum_ {j\ge 2\log N} e^{-2^j/N} \ll \log N , \end{equation}
since for the second sum on the right-hand side we have $e^{-2^j/N}<1/e<1/2$ and also  $e^{-2^{j+k}/N} = (e^{-2^j/ N})^{2^k}$; therefore 
\[\sum_ {j\ge 2\log N} e^{-2^j/N}  < 1/2 + 1/2^2+1/2^4+1/2^8+\cdots < 1. \]

We now compute $\mathcal{S}(q)$. First,
by \eqref{PsiAP} when $(b,q)=1$ we obtain
\[ \begin{split} \Psi(r; q, b)\Psi(r; q, -b) = \frac{N^2}{\phi(q)^2} &+ \frac{N^{1+\beta_1}\Gamma(\beta_1)}{\phi(q)^2} (\chi_1(b) +\chi_1(-b)) +   \frac{\chi_1(b)\chi_1(-b)\Gamma(\beta_1)^2N^{2\beta_1}}{\phi(q)^2} \\
&+ O\left(\frac{N^2e^{-c_1\sqrt{\log N}}}{\phi(q)}\right) 
+ O\left(N^2e^{-2c_1\sqrt{\log N}}\right). \end{split} \]
Thus
\[  \sum_{\substack{1\le b\le q\\ (b,q)=1}}\Psi(r; q, b)\Psi(r; q, -b) =  \frac{N^2}{\phi(q)} +      \frac{\chi_1(-1)\Gamma(\beta_1)^2N^{2\beta_1}}{\phi(q)} + O\left(N^2e^{-c_1\sqrt{\log N}}\right) 
+ O\left(\phi(q)N^2e^{-2c_1\sqrt{\log N}}\right),  \]
where we used the fact that the sum of a Dirichlet character over a reduced residue class vanishes, and that $\chi_1$ is a real character so that $\chi_1(b)\chi_1(-b)=\chi_1(-1)$.
Anticipating our choice of $N$ in the next section, we now take $q\le e^{c_1\sqrt{\log N}}$. Next we apply \eqref{Sq3}, and since the error term $O(q(\log q\log N)^2)$ from that equation and the last error term above may both be absorbed into the first error term, we conclude
\begin{equation} \label{Sq4} \mathcal{S}(q) = \frac{N^2}{\phi(q)} +  \frac{\chi_1(-1)\Gamma(\beta_1)^2N^{2\beta_1}}{\phi(q)} + O(N^2e^{-c_1\sqrt{\log N}}). \end{equation}

\section{Proof of Theorem 1}

We will now prove the following more precise version of \Cref{thm1}.
\begin{theorem} \label{thm3} Assume the Weak Goldbach Conjecture with a given fixed $0<\delta<1$. For $q$ sufficiently large suppose $\chi_1$ is a real character modulo $q$ for which $L(s,\chi_1)$ has a real zero $\beta_1$ with $1 - c/\log q<\beta_1$ for a positive constant $c$. Let $c_1$ be the positive constant in the prime number theorem for arithmetic progressions \eqref{PNTAP}. Then for any fixed positive constant $c'<c_1$  we have
\begin{equation}  \beta_1 < 1 - \frac{\frac12(c')^2\log(\frac{1}{1-\delta})}{\log^2q}.\end{equation}
\end{theorem}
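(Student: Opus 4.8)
The plan is to combine the two evaluations of $\mathcal{S}(q)$ obtained in the preceding sections. From \eqref{Sq1c} we have the lower and upper bounds on $\mathcal{S}(q)$ coming from the Weak Goldbach Conjecture, while \eqref{Sq4} gives the exact asymptotic evaluation of $\mathcal{S}(q)$ in terms of the exceptional zero $\beta_1$. Setting these two expressions against each other will isolate the term $\chi_1(-1)\Gamma(\beta_1)^2 N^{2\beta_1}/\phi(q)$ and force $\beta_1$ to stay away from $1$. The first thing I would do is fix the choice of $N$ in terms of $q$; since the theorem statement has $\log^2 q$ in the denominator and the error terms involve $e^{-c_1\sqrt{\log N}}$, the natural scaling is $\log N \asymp \log^2 q$, i.e. take $N = e^{A\log^2 q}$ for a suitable constant $A$, so that $e^{-c_1\sqrt{\log N}} = q^{-c_1\sqrt{A}}$ becomes a genuine power-saving against the main term $N^2/\phi(q)$. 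I also need to check that the constraint $q\le e^{c_1\sqrt{\log N}}$ imposed before \eqref{Sq4} is satisfied, which again holds provided $\sqrt{A}\,c_1 > 1$.

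Next I would split into the two parity cases according to the sign $\chi_1(-1) = \pm 1$, exactly as flagged in \Cref{thm1}. Substituting \eqref{Sq4} into \eqref{Sq1c} and dividing through by $N^2/\phi(q)$, the inequalities become, after absorbing the error terms into a quantity of size $O(\phi(q)e^{-c_1\sqrt{\log N}})=o(1)$,
\[ \delta \le 1 + \chi_1(-1)\Gamma(\beta_1)^2 N^{2\beta_1 - 2} + o(1) \le 2-\delta. \]
When $\chi_1(-1) = -1$, the middle term is $1 - \Gamma(\beta_1)^2 N^{2(\beta_1-1)}$, and the \emph{lower} bound (A) gives $\Gamma(\beta_1)^2 N^{2(\beta_1-1)} \le 1-\delta + o(1)$; when $\chi_1(-1)=+1$, the middle term is $1 + \Gamma(\beta_1)^2 N^{2(\beta_1-1)}$, and the \emph{upper} bound (B) gives the same inequality $\Gamma(\beta_1)^2 N^{2(\beta_1-1)}\le 1-\delta+o(1)$. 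This is the structural reason the two halves of the conjecture play symmetric roles for the two parities, and it is where the power-series method pays off by making (B) usable.

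From the inequality $\Gamma(\beta_1)^2 N^{2(\beta_1-1)} \le 1-\delta+o(1)$ I would extract the bound on $\beta_1$. Since $\beta_1$ is close to $1$, $\Gamma(\beta_1) = 1 + O(1-\beta_1)$ is bounded below by a positive constant, so taking logarithms yields
\[ 2(\beta_1 - 1)\log N \le \log(1-\delta) + O(1) + o(1), \]
hence $1-\beta_1 \ge \tfrac{1}{2}\log\!\big(\tfrac{1}{1-\delta}\big)/\log N$ up to lower-order corrections. Inserting $\log N = A\log^2 q$ and optimizing the admissible constant $A$ subject to $\sqrt{A}\,c_1 > 1$ (so that $A$ can be taken arbitrarily close to $1/c_1^2$ from above, i.e. effectively $A \to (c')^{-2}$ for any $c' < c_1$) produces the stated bound $\beta_1 < 1 - \tfrac12 (c')^2\log(\tfrac{1}{1-\delta})/\log^2 q$. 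The main obstacle I anticipate is bookkeeping the error terms at the chosen scale: I must verify that with $\log N \asymp \log^2 q$ every $O$-term in \eqref{Sq1c} and \eqref{Sq4} — in particular the $O(N\log^2 N)$ from the odd-$n$ contribution and the $O(\phi(q)N^2 e^{-2c_1\sqrt{\log N}})$ term — is genuinely $o(N^2/\phi(q))$, and that the factor $\Gamma(\beta_1)^2$ does not degrade the constant. The hypothesis $1 - c/\log q < \beta_1$ is used only to guarantee $\Gamma(\beta_1)$ stays near $\Gamma(1)=1$ and to ensure the exceptional character is unique, so it enters the constants but not the shape of the argument.
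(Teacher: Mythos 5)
Your proposal is correct and follows essentially the same route as the paper: combining \eqref{Sq1c} with \eqref{Sq4}, using the parity of $\chi_1(-1)$ to deduce $\Gamma(\beta_1)^2N^{2(\beta_1-1)}\le 1-\delta+o(1)$ from part (A) or (B) respectively, and your parametrization $N=e^{A\log^2q}$ with $A$ strictly above $1/c_1^2$ is exactly the paper's choice $\log N=(\log q/c'')^2$ with $c'<c''<c_1$, the intermediate constant $c''$ supplying the slack you describe as ``optimizing $A$'' so the $o(1)$ corrections (including $\Gamma(\beta_1)=1+O(1/\log q)$, which the hypothesis $1-c/\log q<\beta_1$ guarantees, just as you note) still yield the strict final inequality with $(c')^2$. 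The only cosmetic point is that your intermediate display should read $\log(1-\delta)+o(1)$ rather than $\log(1-\delta)+O(1)$, which your own remarks about $\Gamma(\beta_1)$ already justify.
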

\begin{proof}[Proof of \Cref{thm3}]
We substitute \eqref{Sq4} into \eqref{Sq1c} and obtain
\[ \delta \frac{N^2}{\phi(q)} + O(N\log N)  \leq  \frac{N^2}{\phi(q)} +  \frac{\chi_1(-1)\Gamma(\beta_1)^2N^{2\beta_1}}{\phi(q)} + O(N^2e^{-c_1\sqrt{\log N}}) \leq (2-\delta )\frac{N^2}{\phi(q)} + O(N\log^2 N),\]
and therefore
\[ - (1-\delta) +O(\phi(q)e^{-c_1\sqrt{\log N}}) \le \chi_1(-1)\Gamma(\beta_1)^2N^{2(\beta_1-1)}\le (1-\delta) +O(\phi(q)e^{-c_1\sqrt{\log N}}).\]
If $\chi(-1) = -1$ then from the lower bound above we obtain
\begin{equation} \label{key} \Gamma(\beta_1)^2N^{2(\beta_1-1)} \le (1-\delta) +O(\phi(q)e^{-c_1\sqrt{\log N}}),\end{equation}
while if $\chi(-1)=1$ the upper bound above gives \eqref{key}. Thus \eqref{key} holds in both cases. 

Now choose $c'>0$ to be any fixed constant with $c'<c_1$ and a second constant $c''$ such that $c'<c''<c_1$. Defining  $N$ by 
\[ \log N := \left(\frac{1}{c''}\log q\right)^2, \]
so that $N\to \infty$ as $q\to \infty$.
Solving for $q$ we have $q=e^{c''\sqrt{\log N}}$, which verifies our earlier use of the inequality $q\le e^{c_1\sqrt{\log N}}$. Further notice that the error term in \eqref{key} is $o(1)$ as $q\to \infty$ since
\[ \phi(q)e^{-c_1\sqrt{\log N}} \le e^{-(c_1-c'')\sqrt{\log N}}=o(1).\]
Rewriting \eqref{key} we obtain
\[ \beta_1 \le 1+ \frac{ \log\left(\frac{1-\delta+o(1)}{\Gamma(\beta_1)^2}\right) }{2\log N} = 
1 - \frac{\frac12(c'')^2 \log\left(\frac{\Gamma(\beta_1)^2}{1-\delta +o(1)}\right) }{\log^2q}.\]
It is easy to verify from the definition of the gamma function $\Gamma(s)$ given in \eqref{gamma} that $\Gamma'(s)$ is analytic when $\text{Re}(s)>0 $ and since $1-c/\log q \le \beta_1 <1$ and $\Gamma(1)=1$ we see by the mean value theorem that $\Gamma(\beta_1)=1 +O(1/\log q)=1+o(1)$ as $q\to \infty$.
We conclude
\[ \beta_1 \le
1 - \frac{\frac12(c'')^2 \log\left(\frac{1+o(1)}{1-\delta +o(1)}\right) }{\log^2q} < 1 - \frac{\frac12(c')^2 \log\left(\frac{1}{1-\delta }\right) }{\log^2q}\]
on taking $q$ sufficiently large, which proves \Cref{thm3}.
\end{proof}

\section{ Proof of Lemma 1}

\begin{proof} We define
\begin{equation} H_q(k) := \prod_{\substack{ p \mid k \\ (p,2q)=1}} \!\left(\frac{p - 1}{p - 2}\right), \end{equation}
and take $ H(k):= H_1(k)$. Thus we can rewrite \eqref{SingProd} as 
\begin{equation} \mathfrak{S}(k) = \1_{2|k} 2C_2 H(k). \end{equation}
Since clearly
\begin{equation} H(qk) = H(q)H_q(k) ,\end{equation}
we have 
\[ G_q(x) = \sum_{k\le x} \mathfrak{S}(qk) = 2C_2 H(q) \sum_{k\le x}\1_{2|qk} H_q(k) .\]
Using $ \1_{2|qk}= \1_{2| q}+\1_{2\nmid q}\1_{2|k}$, we have
\[ G_q(x) = 2C_2 H(q)\left( \1_{2|q}\sum_{k\le x} H_q(k) + \1_{2\nmid q}\sum_{\substack{k\le x \\ 2|k}} H_q(k) \right).\]
Writing 
\begin{equation} \widetilde{G}_q(x) := \sum_{k\le x} H_q(k) ,\end{equation}
we conclude, on noting $H_q(2k) =H_q(k)$, that
\begin{equation} \label{G_qformula} G_q(x) = 2C_2 H(q)\left( \1_{2|q}\widetilde{G}_q(x) + \1_{2\nmid q}\widetilde{G}_q(x/2)\right).\end{equation}
We will prove below that 
\begin{equation}\label{desired} \widetilde{G}_q(x) = \frac{q}{(2,q)\phi(q)C_2H(q)}x + O(\log 2x), 
\end{equation}
which on substituting into \eqref{G_qformula} gives immediately
\[ G_q(x) = \frac{q}{\phi(q)}x +O(H(q)\log 2x). \]
\Cref{lem1} now follows from the estimate $H(q) \ll q/\phi(q)$ which can be verified by the calculation
\begin{equation} \label{phi-H}
    \frac{\phi(q)}{q} H(q) = \frac{1}{(2,q)}\prod_{\substack{p\mid q\\p>2}}\left( 1-\frac{1}{p}\right)\left(\frac{p-1}{p-2}\right) \le \prod_{p>2}\left(1+\frac{1}{p(p-2)}\right) \ll 1.
\end{equation}
\end{proof}
\begin{proof}[Proof of \eqref{desired}]
We give an expanded version of Montgomery's very nice proof \cite[Lemma 17.4]{Montgomery71}. First, 
\[ H_q(k) = \prod_{\substack{ p \mid k \\ (p,2q)=1}} \!\left(1+\frac{1}{p - 2}\right)= \sum_{d|k}f_q(d),\]
where 
\[  f_q(d) = \1_{(d,2q)=1}\, \mu(d)^2\prod_{p\mid d}\left( \frac1{p-2}\right).\]
Hence
\begin{equation}\label{last}  \begin{split} \widetilde{G}_q(x) &= \sum_{k\le x}\sum_{d|k}f_q(d) \\
 &= \sum_{d\le x} f_q(d)\sum_{\substack{k\le x \\ d|k}} 1   \\&
 = x\sum_{d\le x }\frac{f_q(d)}{d} + O\bigg(\sum_{d\le x} f_q(d)\bigg) \\&
 = x\sum_{d=1 }^\infty\frac{f_q(d)}{d} + O\bigg(x\sum_{d >  x} \frac{f_1(d)}{d}\bigg) + O\bigg(\sum_{d\le x} f_1(d)\bigg) .
\end{split} \end{equation}
 
Since $f_q(d)$ is multiplicative and $d$ square-free, recalling the equality \eqref{phi-H}, we have
 \[ \begin{split} \sum_{d=1 }^\infty\frac{f_q(d)}{d} &= \prod_p\left( 1+ \frac{f_q(p)}{p}\right) \\&
 =\prod_{\substack{ p\\ (p,2q)=1}}\left( 1+ \frac{1}{p(p-2)}\right) \\&
 =\prod_{p>2}\left( 1+ \frac{1}{p(p-2)}\right)\prod_{\substack{ p\mid q\\ p>2}}\left( 1+ \frac{1}{p(p-2)}\right)^{-1}\\&
 = \frac{1}{C_2}\prod_{\substack{ p\mid q\\ p>2}}\left(\frac{p(p-2)}{(p-1)^2}\right)\\&
 = \frac{q}{ (2,q)\phi(q)C_2 H(q)},
 \end{split} \]
which gives the main term in \eqref{desired}. For the two error terms above, we see that $f_1(d) = \mu(d)^2/\phi_2(d)$, where $\phi_2(p):=p-2$
and this is extended to $\phi_2(d)$ for squarefree $d$ with $(d,2)=1$ by multiplicativity. Thus
 \[ \begin{split}  S(x,f_1) &:= \sum_{d\le x}f_1(d) = \sum_{\substack{d\le x\\ (d,2)=1}}\frac{\mu(d)^2}{\phi_2(d)} \\&
 \ll \prod_{2<p\le x}\left( 1+ \frac{1}{p-2}\right) \\ &
 = \exp\left( \sum_{2<p\le x} \log\left( 1+ \frac{1}{p-2}\right)\right) \\&
 \ll \exp\left( \sum_{p\le x} \frac1{p}\right) \ll \exp(\log\log2 x +O(1)) \ll \log2 x ,
 \end{split}\]
by Mertens formula. Finally by partial summation
 \[ \sum_{d>  x} \frac{f_1(d)}{d} = \int_{x^+}^\infty \frac{dS(u,f_1)}{u} \ll \frac{\log 2 x}{x} + \int_x^\infty\frac{ \log 2 u}{u^2}\, du \ll \frac{\log 2 x}{x} .\]
Using these estimates in \eqref{last} completes the proof of \eqref{desired}.
\end{proof}

\section{Proof of Theorem 2}

Define
\begin{equation} \label{Psi_2(r,k)} 
\Psi_2(r,k) := \sum_n\Lambda(n)\Lambda(n-k)r^{2n-k}. \end{equation}
Then in place of \eqref{Psi^2} we have, on letting $k=n-n'$, 
\[ 
|\Psi_2(z)|^2 = \sum_{n,n'}\Lambda(n)\Lambda(n')r^{n+n'}e((n-n')\alpha) = \sum_{k=-\infty}^\infty \Psi_2(r,k) e(k\alpha). \]
Since $\Psi_2(r,k)=\Psi_2(r,-k)$, we have
\begin{equation} \label{|Psi|^2} 
|\Psi_2(z)|^2 = \Psi_2(r,0) + 2 {\rm Re} \sum_{k}\Psi_2(r,k) e(k\alpha). \end{equation}
Recalling  that $r=e^{-1/N}<1$ from \eqref{N}, we first show that, for odd $k\ge 1$,   
\begin{equation} \label{Psi2_k_odd}
\Psi_2(r,k) \ll e^{-k/N}\log^2N + (\log k)e^{-k/N} \log N, \end{equation}
which corresponds to \eqref{psi_2(x,k)odd}.
To prove this, note first that $\Lambda(n-k)=0$  for $k\ge n$. Next, for $k\ge 1$ odd,   $\Lambda(n)\Lambda(n-k) =0$ holds i) if $n$ is even and $n\neq 2^j$, or ii) if $n$ is odd and $n-k\neq 2^j$. Hence
\[\begin{split}
\Psi_2(r,k) &= (\log2) \sum_{k<2^j} \Lambda(2^j-k)r^{2^j+(2^j-k)}
+ (\log2)\sum_{j}\Lambda(2^j+k)r^{2^{j+1}+k}  \\
&\ll e^{-k/N} \left(\sum_{k<2^j}\log(2^j-k) e^{-(2^j-k)/N}
+ \sum_j \log(2^j+k)e^{-2^{j+1}/N}\right).
\end{split}\]
To estimate these last two sums we use $\sum_j e^{-2^j/N} \ll \log{N} $ from  \eqref{sum_exp} together with the estimate $\sum_j je^{-2^j/N} \ll \log^2{N} $
obtained immediately by the argument used to obtain \eqref{sum_exp}.
For the first sum there is one $j$ satisfying $k<2^j \le 2k$ and therefore we have
\[\sum_{k<2^j\le 2k}\log(2^j-k) e^{-(2^j-k)/N}\ll \log k. \]
For $2^j>2k$, we have $2^j-k> 2^j -2^{j-1} = 2^{j-1}$, and therefore
\[ \sum_{2k<2^j}\log(2^j-k) e^{-(2^j-k)/N}\ll \sum_j j e^{-2^{j-1}/N}\ll \log^2N. \]
For the second sum, since $\log(2^j +k) \ll j +\log k $, we have
\[ \sum_j \log(2^j+k)e^{-2^{j+1}/N} \ll \sum_j (j +\log k)e^{-2^{j}/N}\ll \log^2N + (\log k) (\log N). \]
Substituting we have
\[\Psi_2(r,k) \ll e^{-k/N}( \log^2N + (\log k) (\log N)) 
\]
which proves \eqref{Psi2_k_odd}.

Next
\[ \Psi_2(r,0) = \sum_n\Lambda(n)^2r^{2n} \le \sum_n \Lambda(n) (\log n) e^{-2n/N}.\]
Letting $\psi(x):= \sum_{n\le x}\Lambda(n)$ and using the Chebyshev bound $\psi(x) \ll x$ we have by partial summation
\begin{equation} \begin{split} \label{Psi_2(r,0)}
\Psi_2(r,0) 
&\ll \int_2^\infty \psi(u) \dfrac{d}{du} \left( (\log u)\, e^{-2u/N} \right) du \\&
\ll \int_2^\infty u e^{-2u/N} \left( \frac1u + \frac2N\log u \right) du \\&
\ll \log N \int_2^N e^{-2u/N} du + \frac1N \int_N^\infty (u\log u )e^{-2u/N} du \\&
\ll N\log N. \end{split} \end{equation}

For $k\ge 2$ even, we use partial summation with $\psi_2(x,k)$ and the upper bound conjecture \eqref{WPPC} to obtain
\begin{equation} \label{Psi_2conjecture}\begin{split}
\Psi_2(r,k) &= -\int_k^\infty \psi_2(u,k)\, \dfrac{d}{du} e^{-(2u-k)/N}\, du = \frac{2e^{-k/N}}{N} \int_k^\infty \psi_2(u,k)e^{-2(u-k)/N}\, du \\&
\le \frac{2(2-\delta)\mathfrak{S}(k)e^{-k/N}}{N} \int_k^\infty\left((u-k) +o(u)\right) e^{-2(u-k)/N}\, du \\&
=  \frac{2-\delta}{2}\mathfrak{S}(k)Ne^{-k/N} \int_0^\infty(v +o(v)+ o(k/N)) e^{-v}\, dv \\&
=  \frac{2-\delta}{2}\mathfrak{S}(k)Ne^{-k/N} \left(1+o(1)+o(k/N)\right). \end{split}\end{equation}

Corresponding to $\mathcal{S}(q)$ in \eqref{Sq}, we define
\begin{equation} \label{Tq}  \mathcal{T}(q) := \frac{1}{q} \sum_{a=1}^q |\Psi_2(re(a/q)|^2 \end{equation}
and have by \eqref{|Psi|^2} that
\[ \label{Tq1} \mathcal{T}(q)  = \Psi_2(r,0) + 2 \sum_{\substack{k \\ q \mid k}} \Psi_2(r,k)
=\Psi_2(r,0) + 2\sum_j\Psi_2(r,qj).
\]
Applying our estimates \eqref{Psi2_k_odd}, \eqref{Psi_2(r,0)}, and \eqref{Psi_2conjecture} for $\Psi_2(r,k)$, and recalling $1\le q\le N$ by \eqref{q<N},  we have
\[ \begin{split} \mathcal{T}(q) &\le O(N\log N) + (2-\delta)N \sum_j\mathfrak{S}(qj)e^{-qj/N}(1+o(1)+o(qj/N)) \\
&\qquad+ O\left((\log^2N)\sum_je^{-qj/N} + (\log{N})\sum_j \log(qj)\,e^{-qj/N}\right) \\ &
\le (2-\delta)N(1+o(1))\sum_j\mathfrak{S}(qj)e^{-qj/N} + o(V_q(N)) + O(N\log^2N), \end{split} \]
where $V_q(N)$ is defined in \eqref{V_q} and by \eqref{V_q=} $ V_q(N) = \frac{N^2}{\phi(q)} + O(N\log N\log\log N)$. We show below that 
\begin{equation} \label{Slast} \sum_j\mathfrak{S}(qj)e^{-qj/N} = \frac{N}{\phi(q)} + O(\log N\log\log N),\end{equation}
and thus we conclude
\begin{equation} \label{Tqbound}\mathcal{T}(q) \le (2-\delta)\frac{N^2}{\phi(q)}(1+o(1)) +O(N\log^2N).\end{equation}
The calculation for \eqref{Slast} is nearly the same as the earlier one for obtaining \eqref{V_q=}. We have
\[ \begin{split} \sum_j\mathfrak{S}(qj)e^{-qj/N} &= \int_{1^-}^\infty e^{-qu/N}dG_q(u)\\& =  \frac{q}{\phi(q)}\int_{1}^\infty e^{-qu/N}du +\int_{1^-}^\infty e^{-qu/N}dR_q(u) \\& =\frac{N}{\phi(q)}\int_{q/N}^\infty e^{-v}dv +O\left(\frac{q^2}{\phi(q)N}\int_{1}^\infty e^{-qu/N}\log 2u\, du\right) \\ &
= \frac{N}{\phi(q)} +O(\frac{q}{\log q}) + O(\frac{q}{\phi(q)}\log(2N/q)) = \frac{N}{\phi(q)} + O(\frac{q}{\phi(q)}\log N).
\end{split} \]
Combining this with \eqref{q_phi(q)} gives \eqref{Slast}.

The evaluation of $\mathcal{T}(q)$ using the prime number theorem for arithmetic progressions is almost identical to the earlier evaluation of $\mathcal{S}(q)$. 
Using 
\[ \Psi(re(a/q)) = \sum_{b=1}^q e(ab/q)\Psi(r; q, b) \]
in \eqref{Tq}, we have
\[ \begin{split}  \mathcal{T}(q) &= \frac{1}{q} \sum_{a=1}^q \sum_{1\le b,b'\le q} e(a(b-b')/q)\Psi(r; q, b)\Psi(r; q, b') \\ &
= \sum_{\substack{1\le b,b'\le q\\ q \mid b-b'}} \Psi(r; q, b)\Psi(r; q, b') \\&
=\sum_{b=1}^q\Psi(r; q, b)^2 = \sum_{\substack{1\le b\le q\\ (b,q)=1}}\Psi(r; q, b)^2 +O(q(\log q\log N)^2),\end{split}\]
on using \eqref{Psi(d>1)}. From \eqref{PsiAP} we obtain
\[ \Psi(r; q, b)^2 = \frac{N^2}{\phi(q)^2} + \frac{2\chi_1(b)N^{1+\beta_1}\Gamma(\beta_1)}{\phi(q)^2} +   \frac{\Gamma(\beta_1)^2N^{2\beta_1}}{\phi(q)^2} 
+ O\left(\frac{N^2e^{-c_1\sqrt{\log N}}}{\phi(q)}\right) 
+ O\left(N^2e^{-2c_1\sqrt{\log N}}\right),\]
and substituting this into the previous equation and noting the term with $\chi_1(b)$ vanishes when summed, we obtain as before
\begin{equation} \label{Tq4} \mathcal{T}(q) = \frac{N^2}{\phi(q)} +  \frac{\Gamma(\beta_1)^2N^{2\beta_1}}{\phi(q)} + O(N^2e^{-c_1\sqrt{\log N}}), \end{equation} 
which is the same as the result for $\mathcal{S}(q)$ when $\chi(-1)=1$.
Combining \eqref{Tqbound} and \eqref{Tq4} we obtain \eqref{key}. The proof of \Cref{thm3} (and thus of \Cref{thm1}) then follows as before. This concludes the proof of \Cref{thm2}.



\begin{thebibliography}{ABCD99}



\bibitem[BHMS19]{Bhowmik-H-M-SGoldbach2019} Gautami Bhowmik, Karin Halupczok, Kohji Matsumoto, Yuta Suzuki, {\it Goldbach representations in arithmetic progressions and zeros of Dirichlet L-functions}, Mathematika {\bf 65} (2019), no. 1, 57--97.
 

\bibitem[BH20]{BhowHalup2020} Gautami Bhowmik and Karin Halupczok, {\it Condtional Bounds on Siegel Zeros}, arXiv:2010.01308 [math.NT].


\bibitem[Fei16]{Fei16} JinHua Fei, {\it An application of the Hardy-Littlewood conjecture}, J. Number Theory {\bf 168} (2016), 39--44.


\bibitem[Gran07]{Gran1} A. Granville, {\it Refinements of Goldbach's conjecture, and the generalized Riemann hypothesis}, Funct. Approx. Comment. Math., {\bf 37} (2007), 159--173.

\bibitem[Gran08]{Gran2} A. Granville, {\it Corrigendum to \lq\lq Refinements of Goldbach's conjecture, and the generalized Riemann hypothesis\rq\rq}, Funct. Approx. Comment. Math., {\bf 38} (2008), 235--237.


\bibitem[HL22]{HardyLittlewood1922} G. H. Hardy and J. E. Littlewood, {\it Some problems of `Partitio numerorum'; III: On the expression of a number as a sum of primes}, Acta Math. {\bf 44} (1922), no. 1, 1--70. Reprinted as pp. 561--630 in {\it Collected Papers of G. H. Hardy}, Vol. I, Clarendon Press, Oxford University Press, Oxford, 1966.

\bibitem[HR11]{HR11} H. Halberstam and H.-E. Richert, {\it Sieve Methods}, Dover Publications, Mineola, New York, 2011.


\bibitem[Jia20]{Jia20} Chaohua Jia, {\it On the conditional bounds for Siegel zeros}, arXiv:2010.14161 [math.NT].


\bibitem[Mon71]{Montgomery71} Hugh L. Montgomery, {\it Topics in Multiplicative Number Theory}, Lecture Notes in Mathematics, Vol. 227, Springer-Verlag, Berlin-New York, 1971.


\bibitem[MV07]{MontgomeryVaughan2007} H. L. Montgomery and R. C. Vaughan, {\it Multiplicative Number Theory}, Cambridge Studies in Advanced Mathematics {\bf 97}, Cambridge University Press, Cambridge, 2007.


\end{thebibliography}
\end{document}